\newcommand{\bn}{\ensuremath{\mathbf n}}
\newcommand{\R}{\ensuremath{\mathbb R}}
\newcommand{\Z}{\ensuremath\mathbb{Z}}
\newcommand{\W}{\ensuremath{\mathcal{W}}}
\DeclareMathOperator{\Id}{Id}
\newtheorem{theorem}{Theorem}[section]
\newtheorem{cor}[theorem]{Corollary}
\newtheorem{proposition}[theorem]{Proposition}
\newtheorem{defn}[theorem]{Definition}
\newtheorem{remark}[theorem]{Remark}
\newtheorem{example}[theorem]{Example}
\title{Blowup polynomials and delta-matroids of graphs}
\author{Projesh Nath
Choudhury\thanks{\href{mailto:projeshc@iisc.ac.in}{projeshc@iisc.ac.in}.
P.N.~Choudhury was supported by National Post-Doctoral Fellowship (NPDF)
PDF/2019/000275 from SERB (Govt.~of India) and by a C.V.\ Raman
Postdoctoral Fellowship (IISc).}\addressmark{1} and
Apoorva Khare\thanks{\href{mailto:khare@iisc.ac.in}{khare@iisc.ac.in}.
A.~Khare was partially supported by
Ramanujan Fellowship grant SB/S2/RJN-121/2017,
MATRICS grant MTR/2017/000295, and
SwarnaJayanti Fellowship grants SB/SJF/2019-20/14 and DST/SJF/MS/2019/3
from SERB and DST (Govt.~of India),
and by grant F.510/25/CAS-II/2018(SAP-I) from UGC (Govt.~of
India).}\addressmark{1}\addressmark{2}}
\address{\addressmark{1}Department of Mathematics, Indian Institute of
Science, Bangalore -- 560012, India \\
\addressmark{2}Analysis and Probability Research Group, Bangalore --
560012, India}
\abstract{For every finite simple connected graph $G = (V,E)$, we
introduce an invariant, its blowup-polynomial $p_G(\{ n_v : v \in V \})$.
This is obtained by dividing the determinant of the distance matrix of
its blowup graph $G[\bn]$ (containing $n_v$ copies of $v$) by an
exponential factor. We show that $p_G(\bn)$ is indeed a polynomial
function in the sizes $n_v$, which is moreover multi-affine and
real-stable. This associates a hitherto unexplored delta-matroid to each
graph $G$; and we provide a second unexplored one for each tree. As
another consequence, we obtain a new characterization of complete
multipartite graphs, via the homogenization at $-1$ of $p_G$ being
completely/strongly log-concave, i.e., Lorentzian. (These results extend
to weighted graphs.) Finally, we show $p_G$ is indeed a graph invariant,
i.e., $p_G$ and its symmetries (in the variables $\bn$) recover $G$ and
its isometries, respectively.}
\keywords{distance matrix, blowup-polynomial, real-stable polynomial,
Zariski density, delta-matroid}
\begin{document}

\maketitle

Fifty years ago, Graham and Pollak~\cite{Graham-Pollak} showed the
following striking result in algebraic combinatorics: \textit{Given a
tree $T = (V,E)$ with distance matrix $D_T$, the scalar $\det D_T$ is
independent of the tree structure, and depends only on $|V| = |E|+1$.}
Here, $D_G$ for a finite connected, simple graph $G$ denotes its distance
matrix, with the $(v,w)$ entry given by the length of the shortest path
connecting $v \neq w \in V$, and $(D_G)_{vv} = 0\ \forall v \in V$.
This result has been extended to multiple other settings, including
$q$-distance matrices, multiplicative distances, and even combinations of
these -- see e.g.~\cite{CK-tree} and its references for details and for
an overarching generalization. The area has remained active ever since.

Graham then explored the spectral side with Lov\'asz \cite{GL}, including
computing the characteristic polynomial (and roots) and inverse of $D_T$.
This line of research too remains active, and has led to the study of
``distance spectra'' of graphs -- see e.g.~the survey \cite{AH}.

Our work was motivated by both directions. On the algebraic side, we
sought natural graph families $\{ G_i : i \in I \}$ -- e.g.\ trees on $n$
vertices -- such that the map $i \mapsto \det D_{G_i}$ is a ``nice''
function from $I$ to $\R$. On the analysis side, it is well-known that
the characteristic polynomial $\det(x \Id - D_G)$ of the distance matrix
of $G$ does not recover $G$, i.e., there are graphs $G \not\cong H$ with
the same number of nodes, which are ``distance co-spectral''. Thus, we
were interested in finding a different byproduct of $D_G$ that recovers
$G$.

The purpose of this note is to describe such a byproduct of $D_G$ (or of
$G$), which we introduce in the work~\cite{CK-blowup}, and which we term
the \textit{(multivariate) blowup-polynomial} of $G$. We then explain how
this polynomial achieves the above two goals. A third, interesting
byproduct of our work is a -- to our knowledge -- novel family of
delta-matroids, one for every graph $G$ (and we introduce a second novel
delta-matroid for every tree). This third holds because the
blowup-polynomial turns out to be multi-affine and real-stable.

\section{The blowup-polynomial of a graph, and its symmetries}

We begin by introducing the key ingredient needed to define the
blowup-polynomial: the family of \textit{blowup graphs} of $G$:

\begin{defn}
Given a finite simple connected (unweighted) graph $G = (V_G,E_G)$, and a
set of positive integers $\bn = \{ n_v : v \in V_G \}$, the \emph{blowup
graph} $G[\bn]$ is the finite simple connected graph with $n_v$ copies of
the vertex $v$, such that a copy of $v$ is adjacent to one of $w$ if and
only if $v \neq w$ and $(v,w) \in E_G$. Define $M_G := D_G + 2
\Id_{V_G}$, where $D_G$ is the distance matrix of $G$.
\end{defn}

\noindent Blowup graphs are studied in e.g.~\cite{HHN,KKOT,KSS} in
extremal and probabilistic graph theory.

We now claim that -- akin to trees on $n$ vertices for any fixed $n \geq
1$ -- the family of blowups of a fixed graph $G$ is well-behaved
vis-a-vis computing $\det D_{G[\bn]}$:

\begin{theorem}\label{Tmetricmatrix}
Given a finite simple connected (unweighted) graph $G$, there exists a
polynomial $p_G(\bn)$ in the {\em sizes} $n_v$, with integer
coefficients, such that
\[
\det D_{G[\bn]} = (-2)^{\sum_v (n_v-1)} p_G(\bn), \qquad \bn \in
\Z_{>0}^V.
\]
Also, $p_G$ is multi-affine in $\bn$, with constant term $(-2)^{|V|}$
and linear term $-(-2)^{|V|} \sum_{v \in V} n_v$.
\end{theorem}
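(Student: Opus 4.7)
The plan is to express the blowup distance matrix as a rank-$|V|$ perturbation of $-2\,\Id_N$, apply Sylvester's determinant identity to reduce to a $|V|\times|V|$ determinant, and then expand in permutations.

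First I would work out the block structure of $D_{G[\bn]}$. Order the $N:=\sum_v n_v$ vertices of $G[\bn]$ by blowup classes. Since $G$ is connected with $|V|\ge 2$, two copies of the same vertex $v$ lie at distance $2$ (through any neighbor of $v$), while a copy of $v$ and one of $w\neq v$ lie at distance $d_G(v,w)$. Hence the $(v,w)$-block of $D_{G[\bn]}$ equals $d_G(v,w)\,\mathbf{1}_{n_v}\mathbf{1}_{n_w}^\top$ when $v\neq w$ and $2(J_{n_v}-\Id_{n_v})$ when $v=w$. Letting $U$ be the $N\times|V|$ indicator matrix with $U_{iv}=1$ iff vertex $i$ of $G[\bn]$ is a copy of $v$, this unifies as
\[
D_{G[\bn]} \;=\; U\,M_G\,U^\top \;-\; 2\,\Id_N,
\]
because $(M_G)_{vv}=2$ reproduces the diagonal block $2J_{n_v}$ from which $-2\,\Id_N$ subtracts the needed $2\,\Id_{n_v}$.

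Next, I would apply Sylvester's determinant identity $\det(c\,\Id_m + AB)=c^{m-n}\det(c\,\Id_n + BA)$ with $A=U$, $B=M_GU^\top$, $c=-2$, and use $U^\top U=\diag(n_v)$ to obtain
\[
\det D_{G[\bn]} \;=\; (-2)^{N-|V|}\det\bigl(M_G\,\diag(n_v) - 2\,\Id_{|V|}\bigr) \;=\; (-2)^{\sum_v(n_v-1)}\,p_G(\bn),
\]
so one defines $p_G(\bn):=\det\bigl(M_G\,\diag(n_v) - 2\,\Id_{|V|}\bigr)$, which has integer coefficients since $M_G$ and $\diag(n_v)$ do. Multi-affineness then falls out of the Leibniz expansion: writing $A:=M_G\,\diag(n_v)-2\,\Id_{|V|}$, one has $A_{v,v}=2n_v-2$ and $A_{v,w}=d_G(v,w)\,n_w$ for $w\neq v$, and in the $\sigma$-term $\operatorname{sgn}(\sigma)\prod_v A_{v,\sigma(v)}$ each variable $n_u$ appears in exactly one factor with degree $1$ — either $A_{u,u}$ when $\sigma$ fixes $u$, or $A_{\sigma^{-1}(u),u}$ otherwise.

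Finally, the low-order coefficients come out by specialization. Setting $\bn=\mathbf{0}$ gives $p_G(\mathbf{0})=\det(-2\,\Id_{|V|})=(-2)^{|V|}$. Setting $n_w=0$ for all $w\neq v$ and permuting $v$ to the last index yields a block upper-triangular matrix with diagonal blocks $-2\,\Id_{|V|-1}$ and $2n_v-2$, whose determinant is $(2n_v-2)(-2)^{|V|-1}=(-2)^{|V|}-(-2)^{|V|}n_v$; so the coefficient of $n_v$ in $p_G$ equals $-(-2)^{|V|}$, and summing over $v$ gives the claimed linear term. I expect the only real obstacle to be spotting the factorization $D_{G[\bn]}=U\,M_G\,U^\top-2\,\Id_N$; once this is visible, Sylvester's identity and the permutation expansion are routine.
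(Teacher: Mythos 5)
Your proposal is correct, and it rests on exactly the same decomposition as the paper: writing $D_{G[\bn]} = \W M_G \W^T - 2\Id_N$ with $\W$ the block indicator matrix (your $U$), and reducing the $N \times N$ determinant to the $k \times k$ determinant $\det(M_G \diag(n_v) - 2\Id_k)$, which is the transpose of the paper's $\det(\Delta_\bn M_G - 2\Id_k)$. The one genuine difference is the mechanism of that reduction. The paper first passes to the field of rational functions in the entries of $M_G$ and the sizes $n_i$ so that $M_G$ becomes invertible, performs two Schur-complement steps on a bordered block matrix, and then invokes Zariski density to descend back to the polynomial ring and specialize; you instead quote the Weinstein--Aronszajn/Sylvester identity $\det(c\Id_m + AB) = c^{m-n}\det(c\Id_n + BA)$ with $c = -2 \neq 0$, which needs no invertibility hypothesis on $M_G$ and therefore no genericity/density argument at all. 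This makes your route slightly shorter and more elementary for the theorem as stated (the paper's detour is motivated by its aim of working over a general commutative ring, where one would in any case re-prove Sylvester's identity by a density argument over $\Z$). Your Leibniz-expansion verification of multi-affineness and your specializations for the constant and linear coefficients are correct and match what the paper's formula $p_G(\bn) = \det(\Delta_\bn M_G - 2\Id_k)$ yields; the only (harmless) implicit step, shared with the paper, is the verification that inter-class distances in $G[\bn]$ equal the corresponding distances in $G$ and that two copies of a vertex are at distance $2$.
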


Here and below, we mildly abuse notation and refer to both the integer
sizes as well as indeterminates by $n_v$; this will be clear from
context. Also recall, a polynomial $p(\{ n_v \})$ is multi-affine if
$\deg_{n_v}(p) \leq 1$ for all $v$.

\begin{defn}
For a graph $G$ as in Theorem~\ref{Tmetricmatrix}, define its
\emph{(multivariate) blowup-polynomial} to be $p_G(\bn) \in \Z[\bn]$,
where we think of the $n_v$ as indeterminates. Also define the
\emph{univariate blowup-polynomial} of $G$ to be $u_G(n) :=
p_G(n,n,\dots,n)$.
\end{defn}

We clarify this definition with a remark. The polynomial function (by
Theorem~\ref{Tmetricmatrix})
\[
\bn \mapsto  (-2)^{-\sum_v (n_v-1)} \det D_{G[\bn]}, \qquad \bn \in
\Z_{>0}^V
\]
has to first be extended to $\R^V$ from its Zariski dense subset
$\Z_{>0}^V$. It can then be identified with a polynomial in $\R[\bn]$
(with integer coefficients), and it is this polynomial that we denote
here and below by $p_G(\bn)$ as well.

\begin{proof}[Proof of Theorem~\ref{Tmetricmatrix}]
We provide a quick sketch; the key ingredient is again algebraic here:
Zariski density. (In fact, this result holds over a general commutative
ring, and we refer the reader to the full paper~\cite{CK-blowup} for
details.) Let $k := |V|$, \textbf{fix} (throughout this note) an
enumeration $(n_1, \dots, n_k)$ of $\{n_v : v \in V\}$, let $D_G =
(d_{ij})_{i,j=1}^k$, and define
\[
K := \sum_{i=1}^k n_i, \qquad
\W_{K \times k} := \begin{pmatrix} {\bf 1}_{n_1 \times 1} & 0_{n_1
\times 1} & \cdots & 0_{n_1 \times 1} \\
0_{n_2 \times 1} & {\bf 1}_{n_2 \times 1} & \cdots & 0_{n_2 \times 1} \\
\vdots & \vdots & \ddots & \vdots \\
0_{n_k \times 1} & 0_{n_k \times 1} & \cdots & {\bf 1}_{n_k \times 1}
\end{pmatrix}.
\]

Given an integer tuple $\bn \in \Z_{>0}^k$, recall that $D_{G[\bn]} =
M_{G[\bn]} - 2 \Id_K$. Notice that $M_{G[\bn]}$ is a block $k \times k$
matrix with $(i,j)$ block $d_{ij} \cdot {\bf 1}_{n_i \times n_j}$ for $i
\neq j$ and $2 \cdot {\bf 1}_{n_i \times n_i}$ for $i=j$; in particular,
$M_{G[\bn]} = \W M_G \W^T$. We now employ Zariski density, by first
considering the entries of $M_G$ as well as the sizes $n_i$ to be
variables, and working over the field $\mathbb{F}$ of rational functions
in these, with coefficients in $\mathbb{Q}$. In particular, $\det M_G \in
\mathbb{F}^\times$. We compute, using Schur complements repeatedly:
\begin{align}\label{Ecomp}
\det D_{G[\bn]} = &\ \det (\W M_G \W^T - 2 \Id_K) = \det 
\begin{pmatrix} -2 \Id_K & - \W \\ \W^T & M_G^{-1} \end{pmatrix}
\det(M_G),\\
= &\ (-2)^K \det (M_G^{-1} - 2^{-1} \W^T \W) \det (M_G) = (-2)^{K-k}
\det((-2)\Id_k + \Delta_\bn M_G), \notag
\end{align}
where $\Delta_\bn = \W^T \W$ is the diagonal matrix with $(i,i)$ entry $n_i$.
Now~\eqref{Ecomp} proves the result over the field $\mathbb{F}$ of
rational functions, hence -- by Zariski density -- in the subring of
polynomials in the same variables, since both sides of~\eqref{Ecomp} are
polynomial \textit{functions}. As $\mathbb{Q}$ is infinite, we obtain an
equality of polynomials, both of which have integer coefficients.
Finally, specialize the sizes $n_i$ and the entries of $M_G$ to the given
graph-data.
\end{proof}

\begin{remark}\label{Rformula}
It also follows from the above proof that $p_G(\bn) = \det (\Delta_\bn
M_G - 2 \Id_k)$.
\end{remark}

Theorem~\ref{Tmetricmatrix} and its proof enable us to do more: we can
compute the coefficient of every monomial in $p_G$, and relate $p_G$ to
$p_H$ for certain induced subgraphs $H$ of $G$:

\begin{proposition}\label{Pcoeff}
Notation as above.
\begin{enumerate}
\item Given a subset $I \subset V$, the coefficient in $p_G(\bn)$ of
$\prod_{i \in I} n_i$ is
$(-2)^{|V \setminus I|} \det (M_G)_{I \times I}$,
where $(M_G)_{I \times I}$ is the principal submatrix of $M_G$ formed by
the rows and columns indexed by~$I$.

\item Let $H$ be an induced subgraph of $G$ with vertex set $I \subset V$
and no isolated nodes. Then,
\[
p_H(\{ n_i : i \in I \}) = p_G(\bn)|_{n_j = 0\; \forall j \not\in
I} \cdot (-2)^{-|V \setminus I|}.
\]
Thus if some monomial $\prod_{i \in I_0} n_i$ (for $I_0 \subset I$) does
not occur in $p_H$, it does not occur in $p_G$.

\item Suppose $H,K$ are induced subgraphs of $G$, say on node sets $I,J
\subset V$ respectively, and each without isolated nodes.
If $H,K$ are isomorphic, then the coefficients in $p_G(\bn)$ of $\prod_{i
\in I} n_i$ and $\prod_{j \in J} n_j$ are equal.

\item The iterated blowup of a graph $G = (V,E)$ is also a blowup of $G$.
In particular, the blowup-polynomial of $p_{G[\bn]}$ has total degree at
most $|V|$, for all $\bn \in \Z_{>0}^V$.
\end{enumerate}
\end{proposition}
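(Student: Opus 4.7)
The plan is to use the determinantal formula from Remark~\ref{Rformula}, namely $p_G(\bn) = \det(\Delta_\bn M_G - 2\Id_k)$, as the workhorse for all four parts.

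For (1), I would exploit multilinearity of the determinant in its rows. The $i$th row of $\Delta_\bn M_G - 2\Id_k$ splits as $n_i \cdot (M_G)_{i,*} - 2 e_i$, where $(M_G)_{i,*}$ denotes the $i$th row of $M_G$ and $e_i$ the $i$th standard basis row. Expanding multilinearly yields a sum indexed by subsets $J \subseteq [k]$ whose $J$-term is $(-2)^{|J^c|}\prod_{i \in J} n_i \cdot \det B_J$, where $B_J$ has row $i$ equal to $(M_G)_{i,*}$ for $i \in J$ and $e_i$ for $i \notin J$. A cofactor expansion of $\det B_J$ along the standard-basis rows $i \notin J$ collapses it to the principal minor $\det (M_G)_{J \times J}$, and setting $J = I$ finishes the claim.

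For (2), I would substitute $n_j = 0$ for $j \notin I$ directly into the same determinantal formula. Each such row reduces to $-2 e_j$, so Laplace expansion along these rows gives
\[
p_G(\bn)\big|_{n_j=0\,\forall j \notin I} = (-2)^{|V \setminus I|}\det\bigl((\Delta_\bn)_{I \times I}(M_G)_{I \times I} - 2\Id_I\bigr).
\]
Identifying $(M_G)_{I \times I}$ with $M_H$ (a point which requires the induced subgraph distances to match the inherited ones from $G$, and where the ``no isolated nodes'' hypothesis becomes relevant) recasts the right-hand side as $(-2)^{|V \setminus I|} p_H$ via Remark~\ref{Rformula}. The ``in particular'' tail then follows by comparing the coefficient formulas from (1) applied to both $p_G$ and $p_H$. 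Statement (3) is an immediate consequence of (1): the coefficient of $\prod_{i \in I} n_i$ in $p_G$ equals $(-2)^{|V \setminus I|} \det (M_G)_{I \times I}$, which for isomorphic induced subgraphs on $I$ and $J$ depends only on the common (distance) structure and so yields equal coefficients.

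For (4), my plan has two steps. First, check that an iterated blowup is itself a blowup of $G$: the descendants of all copies of $v$ in $(G[\bn])[\mathbf{m}]$ remain pairwise non-adjacent (since $G$ is simple, with no self-loops) and are adjacent to the descendants of copies of $w$ exactly when $v \sim_G w$; hence $(G[\bn])[\mathbf{m}] = G[\bn']$ with $n'_v := \sum_i m_{(v,i)}$. Second, apply Theorem~\ref{Tmetricmatrix} to both $G[\bn]$ and $G$ and bookkeep the $(-2)$ exponents to obtain $p_{G[\bn]}(\mathbf{m}) = (-2)^{|V(G[\bn])| - |V(G)|}\, p_G(\bn')$. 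Since $p_G$ is multi-affine in $k = |V|$ variables and each $n'_v$ is linear in $\mathbf{m}$, the composition has total degree at most $k$, proving the stated bound. The main obstacles I anticipate are the exponent bookkeeping in (4) and justifying the identification $(M_G)_{I \times I} = M_H$ needed to extract (2) and (3) from the determinantal formula.
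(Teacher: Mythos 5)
The extended abstract does not actually prove Proposition~\ref{Pcoeff} (it is deferred to \cite{CK-blowup}), so the comparison is against the intended route through Remark~\ref{Rformula}. Your parts (1) and (4) are correct and are surely that route: the multilinear row-expansion of $\det(\Delta_\bn M_G - 2\Id_k)$ gives (1), and for (4) the identification $(G[\bn])[\mathbf{m}] = G[\bn']$ with $n'_v = \sum_i m_{(v,i)}$, together with the exponent bookkeeping $p_{G[\bn]}(\mathbf{m}) = (-2)^{K-k}\,p_G(\bn')$ where $K = \sum_v n_v$ and $k = |V|$, yields the degree bound (modulo the same Zariski-density step already used in the proof of Theorem~\ref{Tmetricmatrix} to pass from integer tuples to a polynomial identity).

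The genuine gap is exactly the point you flag in (2) and never close: the identification $(M_G)_{I\times I} = M_H$. This is false for a general induced subgraph with no isolated nodes, because an induced subgraph need not be isometrically embedded -- its own graph distances can exceed the ambient ones. Concretely, take $G = C_5$ and $I = \{1,2,3,4\}$, so that $H = P_4$ has no isolated nodes; then $d_H(1,4) = 3$ while $d_G(1,4) = 2$, and one computes $\det M_{P_4} = -12$ whereas $\det\,(M_{C_5})_{I\times I} = -3$. By your own part (1), the coefficients of $n_1 n_2 n_3 n_4$ on the two sides of the identity in (2) then disagree, so the argument (and the statement read literally) breaks. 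The ``no isolated nodes'' hypothesis does not repair this; what is actually needed is $D_H = (D_G)_{I\times I}$, i.e.\ that $H$ be an \emph{isometric} subgraph, or equivalently that $p_H$ be computed from the metric inherited from $G$ -- which is how the metric-space formulation of Remark~\ref{Rmetric} and \cite{CK-blowup} sets things up. The same issue infects your one-line deduction of (3): the coefficient $(-2)^{|V\setminus I|}\det\,(M_G)_{I\times I}$ depends on the distances \emph{in $G$} between the vertices of $I$, not on the isomorphism type of the induced subgraph, so ``isomorphic'' must be upgraded to ``isometric as subspaces of $G$'' for the claim ``depends only on the common structure'' to be true. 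All of the paper's actual uses of (2)--(3) (e.g.\ $P_9$ inside $P_k$) are of this isometric type, so your proof can be salvaged by adding that hypothesis, but as written it does not establish the stated claims.
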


As a simple illustration of the final assertion here, notice that the
path graph $P_3$, the cycle $C_4$, and all star graphs $K_{1,n}$ are
instances of complete bipartite graphs $K_{r,s}$. As $K_{r,s} =
K_2[(r,s)]$ is a blowup of the edge $K_2$, the blowup-polynomials of all
of these graphs are multi-affine of degree $2$, and can be easily
computed.

Proposition~\ref{Pcoeff} has multiple applications; we provide two here.
First, it makes tractable the computation of $p_G(\cdot)$ for certain
more involved graphs. Here is an example.

\begin{example}
Given integers $k,l$ with $0 \leq l \leq k-2$, let $K_k^{(l)}$ denote the
graph on vertices $\{ 1, \dots, k \}$, with all edges connected except
for $(1, 2), \dots, (1,l+1)$. These form a family of chordal graphs, with
isomorphism/isometry group $S_l \times S_{k-l-1}$ corresponding to the
partition of the vertex set $V = \{ 1 \} \sqcup \{ 2, \dots, l+1 \}
\sqcup \{ l+2, \dots, k \}$. Now we have:
\begin{align}\label{EKkl}
p_{K_k^{(l)}}(\bn) = &\ \sum_{r=0}^l \sum_{s=0}^{k-l-1} \left[
(-2)^{k-r-s} (1 + r + s) \right] e_r(n_2, \dots, n_{l+1}) e_s(n_{l+2},
\dots, n_k)\\
&\ + n_1 \sum_{r=0}^l \sum_{s=0}^{k-l-1} \left[ (-2)^{k-r-s-1} (1 -r)
(s+2) \right] e_r(n_2, \dots, n_{l+1}) e_s(n_{l+2}, \dots, n_k), \notag
\end{align}
with $e_r(\cdot)$ the elementary symmetric polynomial.
(The graphs $K_k^{(1)}$ were crucially used in \cite{GKR-critG}.)
\end{example}

The above decomposition of the nodes of $K_k^{(l)}$ is into subsets, each
containing nodes that are all isomorphic to one another. These
auto-isometries (i.e., adjacency-preserving bijections) of the underlying
graph translate into \textit{symmetries} of the blowup-polynomial, as
seen in~\eqref{EKkl}. (We may thus call $p_G$ a \textit{partially
symmetric polynomial}.) Conversely, it is natural to ask if $p_G$ can
recover the auto-isometries of $G$ -- and more strongly, if $p_G$
recovers the graph $G$ itself. Our next result provides a positive
answer.

\begin{proposition}
Given $G$ as above, the symmetries of $p_G$ coincide with the
auto-isometries of $G$. More strongly, the polynomial $p_G$ recovers $G$.
However, this is not true for the univariate specialization $u_G$.
\end{proposition}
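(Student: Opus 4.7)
The plan is to extract all needed information from the \emph{quadratic} coefficients of $p_G$, combined with the determinantal formula $p_G(\bn) = \det(\Delta_\bn M_G - 2 \Id_k)$ from Remark~\ref{Rformula}. By Proposition~\ref{Pcoeff}(1) applied to $I = \{i,j\}$ with $i \neq j$, the coefficient of $n_i n_j$ in $p_G$ equals $(-2)^{|V|-2}(4 - d_{ij}^2)$. Since distinct vertices in a connected graph satisfy $d_{ij} \geq 1$, and $x \mapsto 4 - x^2$ is injective on $[1,\infty)$, this coefficient determines the integer $d_{ij}$. Ranging over all pairs, we reconstruct the entire distance matrix $D_G$ (with its labelling by the variables $n_v$), and hence $G$ itself -- establishing the stronger ``recovery'' assertion at once.

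The same observation gives one direction of the symmetry claim: if a variable-permutation $\sigma$ fixes $p_G$, matching the $n_i n_j$-coefficients on either side forces $d_{ij} = d_{\sigma^{-1}(i) \sigma^{-1}(j)}$, so $\sigma^{-1}$ (hence $\sigma$) is an auto-isometry. For the converse, if $\sigma$ is an auto-isometry with associated permutation matrix $P = P_\sigma$, then $P^T M_G P = M_G$ and $P^T \Delta_\bn P = \Delta_{\sigma \bn}$; plugging these into Remark~\ref{Rformula} (and using that $P$ commutes with $M_G$) yields $p_G(\sigma \bn) = \det(P^T (\Delta_\bn M_G - 2 \Id_k) P) = p_G(\bn)$, as required.

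For the negative claim about $u_G$, observe that the coefficient of $n^r$ in $u_G(n) = p_G(n, \ldots, n)$ equals $(-2)^{|V|-r} \sum_{|I|=r} \det (M_G)_{I \times I}$, which is (up to sign) a coefficient of the characteristic polynomial of $M_G = D_G + 2 \Id$. Thus $u_G$ is determined solely by the distance spectrum of $G$, and any pair of distance-cospectral non-isomorphic graphs -- well-documented in the literature surveyed in~\cite{AH} -- supplies two graphs $G \not\cong H$ with $u_G = u_H$. The main work here is not conceptual but simply locating or exhibiting a convenient explicit co-spectral pair to display.
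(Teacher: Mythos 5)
Your proposal is correct. For the two positive claims your argument coincides in substance with the paper's: the paper reconstructs $G$ from the Hessian $\mathcal{H}(p_G) = ((\partial_{n_i}\partial_{n_j}p_G)(\mathbf{0}))_{i,j} = (-2)^k \mathbf{1}_{k\times k} - (-2)^{k-2} M_G^{\circ 2}$, whose off-diagonal entries are exactly your quadratic coefficients $(-2)^{k-2}(4-d_{ij}^2)$; recovering $d_{ij}$ from $4-d_{ij}^2$ via injectivity on $[1,\infty)$ is the same step, and the symmetry claim is deduced from recovery in both treatments (you spell out the converse direction via $P^T M_G P = M_G$, which the paper leaves implicit). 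The genuine divergence is in the negative claim about $u_G$. The paper exhibits a concrete pair $H' = H[(2,1,1,2,1,1)]$ and $K' = K[(2,1,1,1,1,2)]$ on $8$ vertices and verifies $u_{H'}=u_{K'}$ by direct computation. You instead prove the structural fact that $u_G(n)=\det(nM_G - 2\Id_k)$ is a function of $|V|$ and the characteristic polynomial of $D_G$ alone (this observation also appears in the paper, in the proposition relating the roots of $u_G$ to the eigenvalues of $D_G$), and then invoke the well-known existence of non-isomorphic distance-cospectral connected graphs. That reduction is valid and conceptually cleaner — it explains \emph{why} $u_G$ must fail to be a complete invariant — but it is not self-contained: as you yourself note, an explicit cospectral pair still has to be supplied, whereas the paper's example does double duty, simultaneously witnessing distance-cospectrality and $u_{H'}=u_{K'}$ while $p_{H'}\neq p_{K'}$.
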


\begin{proof}[Proof-sketch]
The first claim follows from the second, which holds because the
Hessian equals
\[
\mathcal{H}(p_G) := ((\partial_{n_i} \partial_{n_j} p_G)({\bf
0}))_{i,j=1}^k = (-2)^k {\bf 1}_{k \times k} - (-2)^{k-2} M_G^{\circ 2},
\]
where given a matrix $M = (m_{ij})$, $M^{\circ 2} := (m_{ij}^2)$ is its
entrywise square. Finally, to study $u_G$, define the graphs $H,K$ in
Figure~\ref{Fig1}, both with vertices $\{ 1, \dots, 6 \}$.
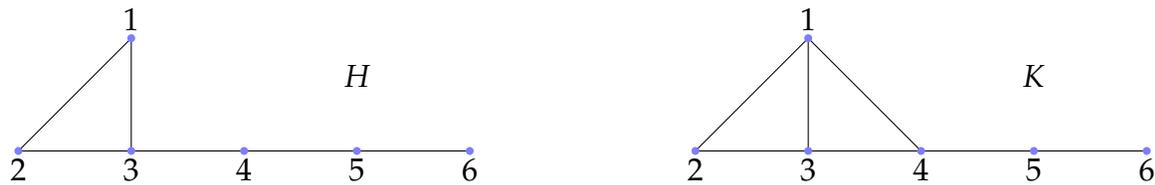
\begin{figure}[ht]
\definecolor{xdxdff}{rgb}{0.49,0.49,1}
\begin{tikzpicture}
\draw (0,1.5)-- (-1.5,0);
\draw (0,0)-- (0,1.5);
\draw (-1.5,0)-- (0,0);
\draw (0,0)-- (1.5,0);
\draw (1.5,0)-- (3,0);
\draw (3,0)-- (4.5,0);
\fill [color=xdxdff] (-1.5,0) circle (1.5pt);
\draw[color=black] (-1.5,-0.25) node {$2$};
\fill [color=xdxdff] (0,0) circle (1.5pt);
\draw[color=black] (0,-0.25) node {$3$};
\fill [color=xdxdff] (1.5,0) circle (1.5pt);
\draw[color=black] (1.5,-0.25) node {$4$};
\fill [color=xdxdff] (3,0) circle (1.5pt);
\draw[color=black] (3,-0.25) node {$5$};
\fill [color=xdxdff] (4.5,0) circle (1.5pt);
\draw[color=black] (4.5,-0.25) node {$6$};
\fill [color=xdxdff] (0,1.5) circle (1.5pt);
\draw[color=black] (0,1.75) node {$1$};
\draw[color=black] (3,1) node {$H$};
\draw (9,1.5)-- (7.5,0);
\draw (9,1.5)-- (9,0);
\draw (9,1.5)-- (10.5,0);
\draw (7.5,0)-- (9,0);
\draw (9,0)-- (10.5,0);
\draw (10.5,0)-- (12,0);
\draw (12,0)-- (13.5,0);
\fill [color=xdxdff] (7.5,0) circle (1.5pt);
\draw[color=black] (7.5,-0.25) node {$2$};
\fill [color=xdxdff] (9,0) circle (1.5pt);
\draw[color=black] (9,-0.25) node {$3$};
\fill [color=xdxdff] (10.5,0) circle (1.5pt);
\draw[color=black] (10.5,-0.25) node {$4$};
\fill [color=xdxdff] (12,0) circle (1.5pt);
\draw[color=black] (12,-0.25) node {$5$};
\fill [color=xdxdff] (13.5,0) circle (1.5pt);
\draw[color=black] (13.5,-0.25) node {$6$};
\fill [color=xdxdff] (9,1.5) circle (1.5pt);
\draw[color=black] (9,1.75) node {$1$};
\draw[color=black] (12,1) node {$K$};
\end{tikzpicture}
\caption{Two non-isomorphic graphs on six vertices with co-spectral
blowups}\label{Fig1}
\end{figure}
Next, we define:
\[
H' := H[(2,1,1,2,1,1)], \qquad K' := K[(2,1,1,1,1,2)].
\]
It is easily checked by direct computations that $H', K'$ are not
isomorphic, but
\[
u_{H'}(n) = u_{K'}(n) = -320 n^6 + 3712 n^5 - 10816 n^4 + 10880 n^3 -
1664 n^2 - 2048 n + 256. \qedhere
\]
\end{proof}

Thus, $H' \not\cong K'$ (both with $|V| = 8$) are graphs whose distance
matrices have the same characteristic polynomial and equal univariate
polynomials $u_{H'} = u_{K'}$; but $p_{H'} \neq p_{K'}$.

Our second application of Proposition~\ref{Pcoeff} involves a special
case of the graphs $K_k^{(l)}$ -- namely, for $l=0$, in which case
$K_k^{(l)} = K_k$, a complete graph. In this case, one checks:
\begin{equation}\label{EPKk}
p_{K_k}(n_1, \dots, n_k) = \prod_{i=1}^k (n_i - 2) + \sum_{i=1}^k n_i
\prod_{i' \neq i} (n_{i'} - 2).
\end{equation}
This is ``fully'' symmetric in the $n_i$. In fact, there are no other
graphs with this property:

\begin{proposition}\label{Psymm}
Given a graph $G$ as above, the blowup-polynomial $p_G(\bn)$ is symmetric
in the variables $\{ n_i : 1 \leq i \leq k \}$ if and only if $G$ is
complete.
\end{proposition}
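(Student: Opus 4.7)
The forward implication is essentially done: when $G = K_k$, formula~\eqref{EPKk} exhibits $p_{K_k}$ as a manifestly symmetric function of $n_1,\dots,n_k$ (alternatively, the whole matrix $M_{K_k} = \mathbf{1}_{k\times k} + \Id_k$ is permutation-invariant, so Remark~\ref{Rformula} makes symmetry obvious). The content is in the converse, and the strategy is to extract enough information from $p_G$ at the level of a single pair of variables to force all pairwise graph distances to equal $1$.

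The key input is Proposition~\ref{Pcoeff}(1), specialized to the degree-$2$ monomials. For any $i \neq j$ in $V$, the coefficient of $n_i n_j$ in $p_G$ equals
\[
(-2)^{k-2}\, \det (M_G)_{\{i,j\}\times \{i,j\}}
= (-2)^{k-2}\bigl(4 - d_{ij}^2\bigr),
\]
since the $2\times 2$ principal submatrix of $M_G = D_G + 2\Id$ indexed by $\{i,j\}$ has diagonal entries $2$ and off-diagonal entries $d_{ij}$. If $p_G$ is symmetric in the $n_v$, comparing the coefficients of $n_i n_j$ and $n_1 n_2$ forces $d_{ij}^2$ to take the same value for \emph{every} pair $i \neq j$. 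Since the $d_{ij}$ are positive integers, this common value is a fixed integer $d \geq 1$.

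It then remains to rule out $d \geq 2$. The plan is a one-line shortest-path argument: if $d \geq 2$ and $k \geq 3$, pick any pair $i \neq j$ and any internal vertex $m$ on a shortest $i$--$j$ path, so that $d_{im}$ is a positive integer strictly less than $d_{ij} = d$, contradicting the conclusion that all pairwise distances equal $d$. (The tiny cases $k = 1, 2$ are immediate from the connectedness hypothesis.) Hence $d = 1$, i.e.\ $(i,j) \in E$ for every pair of distinct vertices, so $G = K_k$.

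There is essentially no hard step here; the only subtlety is making sure one uses the right coefficients. Higher-degree coefficients would tangle symmetry conditions with more involved principal minors of $M_G$, whereas the $2\times 2$ minors directly expose the pairwise distances, which is precisely the data one needs to pin down completeness.
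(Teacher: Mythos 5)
Your proof is correct, and it uses exactly the machinery this note sets up: the note states Proposition~\ref{Psymm} without proof (deferring to~\cite{CK-blowup}), but extracting the $2\times 2$ principal minors $4 - d_{ij}^2$ via Proposition~\ref{Pcoeff}(1) — equivalently, reading off the off-diagonal Hessian entries $(-2)^{k-2}(4-d_{ij}^2)$ — is the natural and evidently intended route. The shortest-path argument ruling out a common distance $d \geq 2$ and the trivial cases $k \leq 2$ are handled correctly, so there is nothing to fix.
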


\section{Real-stability and related properties}

Our next goal is to explain how the blowup-polynomial gives rise to a
hitherto unexplored delta-matroid for every graph. (More generally, one
obtains such a delta-matroid from every finite metric space -- see
Remark~\ref{Rmetric}.) This will follow from the polynomial $p_G$
possessing additional desirable features, which we describe in this
section.

As a motivating example, note that specializing the polynomial
$p_{K_k}(\bn)$ in~\eqref{EPKk} yields the univariate blowup-polynomial
$u_{K_k}(n) = (n-2)^{k-1} (kn + n-2)$, and this is real-rooted. More
generally, this turns out to hold for all graphs $G$ -- in fact, far more
is true. Real-rootedness is the one-variable manifestation of a more
general, and far more powerful notion: a polynomial $p(z_1, \dots, z_k)$
with real coefficients and complex arguments is said to be
\textit{real-stable} if it is non-vanishing whenever $\Im(z_j) > 0\
\forall j$. Real-stable polynomials and their generalizations are the
focus of tremendous recent research, see e.g.\ the well-known papers by
Borcea--Br\"and\'en \cite{BB1,BB2,BB3} and Marcus--Spielman--Srivastava
\cite{MSS1,MSS2}, in which longstanding conjectures of Bilu--Linial,
Johnson, Kadison--Singer, Lubotzky, and others are resolved, and the
Laguerre--P\'olya--Schur program from the early 20th century is
significantly extended (among other remarkable achievements).

In combinatorics, the importance of real-rootedness and (strong)
log-concavity is very well known, see e.g.~\cite{Brenti}, \cite{Stanley}.
Recently, there has been much work in going beyond these notions and
studying the connections of stability to combinatorics and statistical
physics; see e.g.~\cite{Branden-survey}, \cite{Pemantle}. Our next result
shows that graph blowup-polynomials $p_G(\cdot)$ are indeed real-stable
(which is what will yield novel delta-matroids, below):

\begin{theorem}\label{Tstable}
Given a finite simple connected graph $G$, its blowup-polynomial
$p_G(\bn)$ is real-stable in the variables $\{ n_v : v \in V \} = \{ n_1,
\dots, n_k \}$. (In particular, $u_G(n)$ is always real-rooted.)
\end{theorem}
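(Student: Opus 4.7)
The plan is to reduce to the Borcea--Br\"and\'en theorem: for a Hermitian matrix $B$ and positive semidefinite matrices $A_1,\dots,A_k$ of the same size, the polynomial $\det\bigl(B+\sum_i z_i A_i\bigr)$ is real-stable in $\bz$ (or identically zero). Starting from the compact formula in Remark~\ref{Rformula}, $p_G(\bn)=\det(\Delta_\bn M_G-2\Id_k)$, I would factor $\Delta_\bn$ out on the Zariski open set where it is invertible to obtain the rational identity
\[
p_G(\bn) \;=\; \Big(\prod_{i=1}^{k} n_i\Big)\cdot \det\bigl(M_G+\diag(-2/n_1,\dots,-2/n_k)\bigr).
\]
Since $p_G$ is a polynomial by Theorem~\ref{Tmetricmatrix}, and the right-hand side, after clearing denominators, expands to $\sum_{I\subseteq [k]} (-2)^{k-|I|}\det(M_G)_{I\times I}\prod_{i\in I} n_i$ (exactly reproducing the coefficient formula in Proposition~\ref{Pcoeff}(1)), this is an identity of polynomials.

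Next, I would apply Borcea--Br\"and\'en with $B=M_G=D_G+2\Id$ (real symmetric) and $A_i=e_ie_i^T$ (rank-one PSD) to conclude that $f(\bz):=\det(M_G+\diag(\bz))$ is real-stable and multi-affine in $\bz$. Non-vanishing as a polynomial is clear since the coefficient of $\prod_i z_i$ equals $1$. This is the main structural ingredient.

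Finally, I would transfer real-stability through the coordinate-wise M\"obius substitution $z_i=-2/n_i$. Since $\Im(n_i)>0$ implies $\Im(-2/n_i)=2\Im(n_i)/|n_i|^2>0$, this map sends the open upper half-plane into itself. Thus if every $\Im(n_i)>0$, then $f(-2/n_1,\dots,-2/n_k)\neq 0$ and $\prod_i n_i\neq 0$, which forces $p_G(\bn)\neq 0$; so $p_G$ is real-stable. The univariate claim then follows from the standard facts that diagonal specialization preserves real-stability and that univariate real-stable polynomials with real coefficients are real-rooted. The only (mild) obstacle is ensuring that the passage through rational functions is legitimate; but since $p_G$ is known a priori to be a polynomial and the substitution is well-defined on the upper half-plane (each $n_i\neq 0$ there), one may safely carry out the non-vanishing check there, and no further machinery about stability-preserving operators is needed.
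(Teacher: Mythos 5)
Your proposal is correct and follows essentially the same route as the paper: both start from $p_G(\bn) = \det(\Delta_\bn M_G - 2\Id_k)$, factor out $\prod_i n_i$, apply the Borcea--Br\"and\'en determinantal construction with $B$ a real symmetric multiple of $M_G$ and $A_i = E_{ii}$, and then transfer stability through the upper-half-plane-preserving map $n \mapsto -2/n$ (the paper packages this last step as the standard ``inversion preserves stability'' operation, while you verify the non-vanishing directly, which amounts to the same thing).
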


Recall from \cite{Branden}, \cite{WW} that a multi-affine polynomial
$f(z_1, \dots, z_n)$ is real-stable if and only if $\partial_i f \cdot
\partial_j f \geq f \cdot \partial_i \partial_j f$ on $\R^n$, for all
$i,j$. The class of real-stable multi-affine polynomials is also
connected to matroids; see \cite{Branden}, \cite{COSW}.
Theorem~\ref{Tstable} says that graph blowup-polynomials $p_G(\bn)$
provide novel (to our knowledge) examples of such maps.

\begin{proof}
As the goal is to prove real-stability, in this proof we write $p_G(z_1,
\dots, z_k)$ to indicate that the variables are complex (rather than
algebraic). From Remark~\ref{Rformula},
\begin{align}\label{Estable}
p_G({\bf z}) = &\ \det (\Delta_{\bf z} M_G - 2 \Id_k) =
\prod_{j=1}^k z_j \cdot \det(2^{-1}M_G - \Delta_{\bf z}^{-1})
\cdot 2^k \notag\\
= &\ 2^k \prod_{j=1}^k z_j \cdot \det \left( 2^{-1}M_G +
\sum_{j=1}^k (-z_j^{-1} E_{jj}) \right),
\end{align}
where $E_{jj} \in \Z^{k \times k}$ is the elementary matrix with
$(j,j)$-entry $1$. Now we recall a fundamental determinantal example of
real-stable polynomials by Borcea--Br\"and\'en -- see \cite{BB1} (or
\cite[Lemma 4.1]{Branden}). The authors show that if $A_1, \dots, A_k, B$
are real symmetric matrices, with all $A_j$ positive semidefinite, then
the polynomial
\begin{equation}\label{EBH}
f(z_1, \dots, z_k) := \det \left( B + \sum_{j=1}^k z_j A_j \right)
\end{equation}
is real-stable or identically zero. Moreover, ``inversion preserves
stability'': if $g(z_1, \dots, z_k)$ is a polynomial with $z_j$-degree
$d_j \geq 1$ that is real-stable, then so is $z_1^{d_1} g(-z_1^{-1}, z_2,
\dots, z_k)$. (This is because the map $z \mapsto -1/z$ preserves the
upper half-plane.) Now apply~\eqref{EBH} to $A_j = E_{jj}$ and $B =
2^{-1} M_G$, and then apply inversion in each variable, to conclude
via~\eqref{Estable} that $p_G$ is real-stable.
\end{proof}

Returning to $u_G$, which we now know is real-rooted, we also note that
it is indeed related to the distance spectrum of $G$ (i.e., to the
characteristic polynomial of $D_G$):

\begin{proposition}
For any finite simple connected (unweighted) graph $G$, a real number $n$
is a root of $u_G$ if and only if $n \neq 0$ and $2 n^{-1} - 2$ is an
eigenvalue of $D_G$ (with the same multiplicity).
\end{proposition}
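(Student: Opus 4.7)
The plan is to extract a factorization of $u_G(n)$ directly from the determinantal formula $p_G(\bn) = \det(\Delta_\bn M_G - 2\Id_k)$ in Remark~\ref{Rformula}, and then read off the correspondence between its roots and the spectrum of $D_G$. First I would specialize $\bn = (n,\ldots,n)$, so that $\Delta_\bn = n\Id_k$ and
\[
u_G(n) = \det(n M_G - 2\Id_k).
\]

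Since $M_G = D_G + 2\Id_k$ is real symmetric, its eigenvalues are $\lambda_i + 2$, where $\lambda_1,\ldots,\lambda_k$ denote the eigenvalues of $D_G$ listed with multiplicity. Consequently the eigenvalues of $nM_G - 2\Id_k$ are $n(\lambda_i+2) - 2$, which yields the factorization
\[
u_G(n) \;=\; \prod_{i=1}^k \bigl( (\lambda_i+2)\,n - 2 \bigr).
\]
From this factored form the conclusion is essentially immediate: a real $n_0 \neq 0$ is a root of $u_G$ if and only if $(\lambda_i+2)n_0 = 2$ for some $i$, equivalently $\lambda_i = 2n_0^{-1} - 2$ is an eigenvalue of $D_G$; and the multiplicity of $n_0$ as a root of $u_G$ is the number of such indices $i$, which is exactly the multiplicity of $2n_0^{-1} - 2$ in the spectrum of $D_G$. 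Finally, $u_G(0) = \det(-2\Id_k) = (-2)^k \neq 0$ (consistent with the constant term $(-2)^{|V|}$ computed in Theorem~\ref{Tmetricmatrix}), so $0$ is never a root of $u_G$ and the exclusion in the statement is automatic.

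There is no substantive obstacle here; the argument is a one-step reduction to the spectral decomposition of the symmetric matrix $M_G$ via Remark~\ref{Rformula}. The only point requiring a little care is verifying that the bijection $n \mapsto 2n^{-1} - 2$ between nonzero roots of $u_G$ and eigenvalues of $D_G$ respects multiplicities, which is transparent from the displayed factorization.
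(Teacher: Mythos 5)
Your argument is correct and is exactly the intended route: the paper states this proposition without proof here (deferring to the full version), but the factorization $u_G(n)=\det(nM_G-2\Id_k)=\prod_{i}\bigl((\lambda_i+2)n-2\bigr)$ obtained by specializing Remark~\ref{Rformula} at $\bn=(n,\dots,n)$ and diagonalizing the symmetric matrix $M_G=D_G+2\Id_k$ is the natural proof, and your handling of the multiplicities and of the excluded value $n=0$ (where $u_G(0)=(-2)^k\neq 0$) is complete. As a sanity check, your factorization reproduces $u_{K_k}(n)=(n-2)^{k-1}\bigl((k+1)n-2\bigr)$ from the spectrum $\{k-1,\,-1^{(k-1)}\}$ of $D_{K_k}$.
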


\subsection{A novel characterization of complete multipartite graphs}

We next mention two other notions related to stability, which have been
greatly studied in recent years, and which are not satisfied by $p_G$. By
the final assertion in Theorem~\ref{Tmetricmatrix}, the coefficients of
the multi-affine polynomial $p_G$ cannot be normalized to form a
probability distribution, since they are not all of the same sign.
Similarly, the polynomial $p_G$ is clearly not homogeneous. In two
fundamental and important papers, stable polynomials with these two
properties have been studied (in broader settings) by
Borcea--Br\"and\'en--Liggett \cite{BBL} and Br\"and\'en--Huh \cite{BH},
under the name of \textit{strongly Rayleigh measures/polynomials} and
\textit{Lorentzian polynomials}, respectively. Our next result explains
that while $p_G$ is neither strongly Rayleigh nor Lorentzian, a suitable
normalization/homogenization can be. In fact, we completely characterize
all such graphs:

\begin{theorem}\label{Tlorentz}
Given a graph $G$ as above, define its {\em homogenized
blowup-polynomial}
\begin{equation}
\widetilde{p}_G(z_0, z_1, \dots, z_k) := (-z_0)^k p_G \left(
\frac{z_1}{-z_0}, \dots, \frac{z_k}{-z_0} \right) \in \mathbb{R}[z_0,
z_1, \dots, z_k].
\end{equation}
The following are equivalent.
\begin{enumerate}
\item The homogenized polynomial $\widetilde{p}_G(z_0, z_1, \dots, z_k)$
is real-stable.

\item The polynomial $\widetilde{p}_G(z_0, z_1, \dots, z_k)$ is
{\em Lorentzian}. That is, $\widetilde{p}_G(\cdot)$ is homogeneous
of degree $k$ with non-negative coefficients, and given indices $0 \leq
j_1, \dots, j_{k-2} \leq k$, if
\[
g(z_0, z_1, \dots, z_k) := \left( \partial_{z_{j_1}} \cdots
\partial_{z_{j_{k-2}} } \widetilde{p}_G \right)(z_0, z_1, \dots, z_k),
\]
then its Hessian matrix $\mathcal{H}_g := (\partial_{z_i} \partial_{z_j}
g)_{i,j=0}^k \in \R^{(k+1) \times (k+1)}$ is Lorentzian (i.e.,
$\mathcal{H}_g$ is nonsingular and has exactly one positive eigenvalue).

\item $\widetilde{p}_G(\cdot)$ has all coefficients non-negative (i.e.,
of the monomials $z_0^{k - |J|} \prod_{j \in J} z_j$).

\item $(-1)^k p_G(-1,\dots,-1) > 0$, and the normalized ``reflected''
polynomial
\[
(z_1, \dots, z_k) \quad \mapsto \quad \frac{p_G(-z_1, \dots,
-z_k)}{p_G(-1,\dots,-1)}
\]
is {\em strongly Rayleigh}. That is, this multi-affine polynomial is
real-stable, has non-negative coefficients (of all monomials
$\prod_{j \in J} z_j$), and these sum up to $1$.

\item The matrix $M_G = D_G + 2 \Id_k$ is positive semidefinite.

\item The graph $G$ is a blowup of a complete graph -- that is, $G$ is a
complete multipartite graph.
\end{enumerate}
\end{theorem}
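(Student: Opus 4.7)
The plan is to establish a cyclic equivalence pivoting on condition (5), the positive-semidefiniteness of $M_G$. The two key ingredients are the coefficient formula from Proposition~\ref{Pcoeff}(1) and the determinantal representation from Remark~\ref{Rformula}.

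First I would substitute Proposition~\ref{Pcoeff}(1) into the definition of $\widetilde{p}_G$ to compute that the coefficient of $z_0^{k-|I|}\prod_{i\in I}z_i$ in $\widetilde{p}_G$ is $2^{k-|I|}\det(M_G)_{I\times I}$. Hence (3) is equivalent to non-negativity of every principal minor of $M_G$, which for the symmetric matrix $M_G$ is exactly (5). For (5) $\Leftrightarrow$ (6), I would use that a graph is complete multipartite iff it contains no induced copy of an edge-plus-isolated-vertex $K_2\sqcup K_1$: if $G=K_{n_1,\dots,n_r}$ then $M_G=J_k+\diag(J_{n_1},\dots,J_{n_r})$ is a sum of PSD matrices; conversely, an induced $K_2\sqcup K_1$ on $\{u,v,w\}$ (say $u\sim v$) produces a $3\times 3$ principal submatrix of $M_G$ with diagonal entries $2$, $(u,v)$-entry $1$, and $(u,w),(v,w)$-entries $a,b\geq 2$, whose determinant $6+2ab-2a^2-2b^2$ is strictly negative, obstructing PSD.

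The bridge to real-stability is the identity $\widetilde{p}_G(z_0,\mathbf{z})=\det(\Delta_\mathbf{z}M_G+2z_0\Id_k)$, obtained by substituting Remark~\ref{Rformula} into the homogenization and pulling $(-z_0)^k$ inside the determinant. To get (5) $\Rightarrow$ (1), factor $M_G=LL^T$ and use $\det(AB+cI)=\det(BA+cI)$ to rewrite $\widetilde{p}_G=\det(2z_0\Id_k+\sum_{j=1}^k z_j v_j v_j^T)$ with $v_j:=L^T e_j$, which is the Borcea--Br\"and\'en PSD determinantal form and hence real-stable. For the converse (1) $\Rightarrow$ (5), I would specialize $z_1=\dots=z_k=z$ (a substitution that preserves real-stability) and compute $\widetilde{p}_G(z_0,z,\dots,z)=\prod_i(z\lambda_i+2z_0)$; the factor $z\lambda_i+2z_0$ vanishes at a point with $\Im z,\Im z_0>0$ iff $\lambda_i<0$, so real-stability forces every eigenvalue $\lambda_i$ of $M_G$ to be non-negative. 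This yields (1) $\Leftrightarrow$ (3) $\Leftrightarrow$ (5) $\Leftrightarrow$ (6). Then (1) together with (3) implies (2) via the Br\"and\'en--Huh theorem that real-stable homogeneous polynomials with non-negative coefficients are Lorentzian, and (2) $\Rightarrow$ (3) is built into the definition of Lorentzian.

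For (4) $\Leftrightarrow$ (5), the monomial $\prod_{i\in I}z_i$ in $p_G(-\mathbf{z})$ has coefficient $(-1)^k 2^{k-|I|}\det(M_G)_{I\times I}$, and $(-1)^k p_G(-\mathbf{1})=\det(D_G+4\Id_k)$, automatically positive when $M_G\succeq 0$; normalizing gives coefficients $2^{k-|I|}\det(M_G)_{I\times I}/\det(D_G+4\Id_k)$ summing to $1$, all non-negative iff $M_G\succeq 0$. Real-stability of the normalized reflected polynomial is inherited from Theorem~\ref{Tstable} using that $\mathbf{z}\mapsto-\mathbf{z}$ preserves real-stability of polynomials with real coefficients. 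The hardest step is (1) $\Rightarrow$ (5): real-stable homogeneous polynomials in general need not have non-negative coefficients, and the clean trick of collapsing all $z_i$ to a single variable and reading off the eigenvalues of $M_G$ is the main new input that lets the cycle close.
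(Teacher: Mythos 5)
The paper itself does not prove Theorem~\ref{Tlorentz} -- it defers to the full version \cite{CK-blowup} -- so there is no in-paper argument to compare against line by line; but your proposal is a correct and essentially complete proof, built from exactly the toolkit the paper develops (Proposition~\ref{Pcoeff}(1), Remark~\ref{Rformula}, the Borcea--Br\"and\'en determinantal criterion, and Theorem~\ref{Tstable}). The individual steps all check out: the coefficient of $z_0^{k-|I|}\prod_{i\in I}z_i$ in $\widetilde{p}_G$ is indeed $2^{k-|I|}\det(M_G)_{I\times I}$, giving $(3)\Leftrightarrow(5)$ via the all-principal-minors characterization of positive semidefiniteness; the decomposition $M_G=J_k+\diag(J_{n_1},\dots,J_{n_r})$ and the $3\times 3$ minor $6+2ab-2a^2-2b^2=6-(a^2+b^2)-(a-b)^2\le -2$ for an induced $K_2\sqcup K_1$ correctly settle $(5)\Leftrightarrow(6)$; the identity $\widetilde{p}_G=\det(\Delta_{\mathbf z}M_G+2z_0\Id_k)$ together with $\det(AB+cI)=\det(BA+cI)$ gives $(5)\Rightarrow(1)$; and the diagonalization trick $\widetilde{p}_G(z_0,z,\dots,z)=\prod_i(z\lambda_i+2z_0)$ is a clean and valid way to force $\lambda_i\ge 0$ from stability, closing the cycle. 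The handling of $(4)$ via $(-1)^kp_G(-\mathbf 1)=\det(M_G+2\Id_k)=\sum_I 2^{k-|I|}\det(M_G)_{I\times I}$ is also correct. The only caveat worth flagging is in the step $(3)\Rightarrow(2)$: the Br\"and\'en--Huh containment of homogeneous stable polynomials with non-negative coefficients yields ``Lorentzian'' in their (closed) sense, i.e.\ with Hessians having \emph{at most} one positive eigenvalue, whereas the parenthetical definition in the statement demands a nonsingular Hessian; for singular $M_G$ (e.g.\ $G=K_{1,2}$, where $\det(M_G)_{\{2,3\}\times\{2,3\}}=0$) the relevant Hessians can indeed degenerate, so your argument proves $(2)$ in the Br\"and\'en--Huh sense -- which is also what Corollary~\ref{Clorentz} requires -- and the discrepancy lies in the statement's phrasing rather than in your proof.
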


Theorem~\ref{Tlorentz} characterizes the complete multipartite graphs in
terms of stability. We refer the reader to the full
paper~\cite{CK-blowup} for the proof.

It turns out that two additional, related notions in the literature also
characterize the complete multipartite graphs, and we mention them here
for completeness. Suppose a polynomial $p \in \R[z_1, \dots, z_k]$ has
non-negative coefficients. In~\cite{Gurvits}, Gurvits defines $p$ to be
\textit{strongly log-concave} if for every $\alpha \in \mathbb{Z}_{\geq
0}^k$, either the derivative $\displaystyle
\partial^\alpha (p) := \prod_{i=1}^k \partial_{x_i}^{\alpha_i} \cdot p$
is identically zero, or $\partial^\alpha p > 0$ and $\log(\partial^\alpha
(p))$ is concave on $(0,\infty)^k$. Next in~\cite{AGV}, Anari, Oveis
Gharan, and Vinzant define $p$ to be \textit{completely log-concave} if for
all $m \in \Z_{>0}$ and matrices $A = (a_{ij}) \in [0,\infty)^{m \times
k}$, either the derivative
$\displaystyle
\partial_A (p) := \prod_{i=1}^m \left( \sum_{j=1}^k a_{ij} \partial_{x_j}
\right) \cdot p$
is identically zero, or $\partial_A(p)> 0$ and $\log(\partial_A (p))$ is
concave on $(0,\infty)^k$. We now have:

\begin{cor}\label{Clorentz}
Notation as in Theorem~\ref{Tlorentz}.
Then $G$ is complete multipartite if and only if either of the following
holds:
\begin{enumerate}
\setcounter{enumi}{6}
\item The polynomial $\widetilde{p}_G(z_0, \dots, z_k)$ is strongly
log-concave.

\item The polynomial $\widetilde{p}_G(z_0, \dots, z_k)$ is completely
log-concave.
\end{enumerate} 
\end{cor}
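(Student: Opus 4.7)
The plan is to derive Corollary \ref{Clorentz} directly from Theorem \ref{Tlorentz} by invoking the known equivalences between the Lorentzian property and the two log-concavity notions (strong and complete) in the setting of homogeneous polynomials with non-negative coefficients. Thus no new structural analysis of $G$ or of $\widetilde{p}_G$ is required beyond what Theorem \ref{Tlorentz} already provides; the task is essentially bookkeeping across the literature.

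For the forward direction, I would assume $G$ is complete multipartite. By Theorem \ref{Tlorentz}(2) and (3), $\widetilde{p}_G$ is Lorentzian, and in particular it is homogeneous of degree $k = |V| \geq 2$ with non-negative coefficients. The central result of Br\"and\'en--Huh \cite{BH} identifies, for homogeneous polynomials of degree $\geq 2$ with non-negative coefficients, the class of Lorentzian polynomials with the class of strongly log-concave polynomials (in the sense of Gurvits \cite{Gurvits}), and also with the class of completely log-concave polynomials (in the sense of Anari--Oveis Gharan--Vinzant \cite{AGV}). Applying these equivalences to $\widetilde{p}_G$ yields both conditions (7) and (8) of the corollary.

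For the reverse direction, note that the definitions of \emph{strongly log-concave} and \emph{completely log-concave} as recalled in the paper (following \cite{Gurvits} and \cite{AGV} respectively) are stated only for polynomials with non-negative coefficients. Hence assuming either (7) or (8) already forces $\widetilde{p}_G$ to have non-negative coefficients, which is precisely condition (3) of Theorem \ref{Tlorentz}. That theorem then immediately delivers that $G$ is complete multipartite, completing the proof.

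The only delicate point, and hence the closest thing to an obstacle, is verifying that the formal definitions of strong and complete log-concavity adopted here align exactly with those of \cite{BH}, so that the Br\"and\'en--Huh equivalences can be quoted verbatim; this amounts to a routine check of definitions and of the hypothesis that $\widetilde{p}_G$ is homogeneous of degree $k \geq 2$ with non-negative coefficients, which is guaranteed on the Lorentzian side of Theorem \ref{Tlorentz}.
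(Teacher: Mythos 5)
Your proposal is correct and takes essentially the same approach as the paper, which likewise proves the corollary by citing the Br\"and\'en--Huh equivalence (\cite[Theorem 2.30]{BH}) of the Lorentzian property with strong and complete log-concavity for homogeneous polynomials, and then invoking Theorem~\ref{Tlorentz}. Your shortcut in the reverse direction---observing that the definitions of strong/complete log-concavity already presuppose non-negative coefficients, which is condition~(3) of Theorem~\ref{Tlorentz}---is a harmless minor variant of the same argument.
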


\begin{proof}
For arbitrary real homogeneous polynomials, \cite[Theorem 2.30]{BH} shows
that both of these assertions are equivalent to: $\widetilde{p}_G$ is
Lorentzian. Now use Theorem~\ref{Tlorentz}.
\end{proof}

\begin{remark}\label{Rmetric}
As a concluding remark concerning the results mentioned until this point,
we discuss how these results hold in greater generality. First, the
definitions of a blowup and the blowup-polynomial extend to all finite
metric spaces $(X,d)$.
Now Theorems~\ref{Tmetricmatrix}, \ref{Tstable}, and~\ref{Tlorentz},
Corollary~\ref{Clorentz}, as well as Propositions~\ref{Pcoeff}
and~\ref{Psymm} extend to arbitrary finite metric spaces, possibly with
some modifications. We refer the reader to~\cite{CK-blowup} for the
details.
\end{remark}

\section{A blowup delta-matroid for graphs, and one for trees}

In addition to being a graph invariant and a multi-affine polynomial,
$p_G$ also yields a novel delta-matroid for every graph $G$.
Delta-matroids were introduced by Bouchet~\cite{Bouchet1}, and consist of
a finite ``ground set'' $E$ and a nonempty subset of its power set
$\mathcal{F} \subset 2^E$. The elements $F$ of $\mathcal{F}$ are called
\textit{feasible subsets}, and satisfy:
(1)~$\bigcup_{F \in \mathcal{F}} F = E$;
(2)~the \textit{symmetric exchange axiom:} Given $A,B \in \mathcal{F}$
and $x \in A \Delta B$ (their symmetric difference), there exists $y \in
A \Delta B$ such that $A \Delta \{ x, y \} \in \mathcal{F}$.

Br\"and\'en has shown \cite{Branden} that the set of monomials occurring
in a real-stable multi-affine polynomial forms a delta-matroid. In
particular:

\begin{defn}
The \emph{blowup delta-matroid of $G$} is denoted by
$\mathcal{M}_{M_G}$; it has ground set $V$ and feasible subsets
corresponding to the nonzero monomials in $p_G$.
\end{defn}

In fact, more is true: this delta-matroid is
\textit{linear}~\cite{Bouchet2}, in that its feasible subsets are
precisely the sets of indices $I \subset \{ 1, \dots, k \}$ for which the
principal matrix $(M_G)_{I \times I}$ is nonsingular (by
Proposition~\ref{Pcoeff}(1)). This delta-matroid appears to be unexplored
in the literature, and was not known to experts.

The goal of this section is to construct another delta-matroid
$\mathcal{M}'(T)$, this time for all trees $T$. We begin by taking a
closer look at $\mathcal{M}_{M_G}$ for $G$ a ``small'' path graph $P_k =
\{ (1,2), \dots, (k-1,k) \}$. Indeed, one can verify that, for $k \leq
4$,
\begin{equation}\label{Epathmatroid}
\mathcal{M}_{M_{P_k}} = 2^{\{ 1, \dots, k \}} \setminus \left\{ \ \{ i,
i+1, i+2 \}, \ \ \{ i, i+2 \} \ : \ 1 \leq i \leq k-2 \right\}.
\end{equation}

Let us explain why the sets $\{ i, i+1, i+2 \}$ and $\{ i, i+2 \}$ are
infeasible -- i.e., why the coefficients of the monomials $n_i n_{i+1}
n_{i+2}, n_i n_{i+2}$ in $p_{P_k}$ vanish -- for all $k \geq 3$. This
happens because the points $\{ i, i+2 \}$ are part of a graph $\{ i, i+1,
i+2 \} \cong P_3$, which is a blowup of $K_2 = P_2$ -- and in this
blowup, $i, i+2$ are copies of a vertex. More generally:

\begin{proposition}\label{Pzeroterms}
Suppose $G,H$ are finite simple connected graphs, and the tuple $\bn \in
\mathbb{Z}_{>0}^{V(G)}$ is such that $G[\bn]$ is an induced subgraph of
$H$. If some $n_v \geq 2$ and $v_1, v_2 \in G[\bn]$ are copies of $v$,
then the coefficient of $\prod_{i \in I} n_i$ in $p_H(\cdot)$ is zero
whenever $\{ v_1, v_2 \} \subset I \subset V(G[\bn])$.
\end{proposition}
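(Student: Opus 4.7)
The plan is to reduce from the ambient graph $H$ to the blowup $G[\bn]$ itself, where $v_1$ and $v_2$ become genuine graph-theoretic twins, and then observe that the associated rows of $M_{G[\bn]}$ coincide. First I would invoke Proposition~\ref{Pcoeff}(2) with the induced subgraph $G[\bn] \subseteq H$. This is a valid application: since $n_v \geq 2$, the graph $G$ must have at least two vertices (otherwise $G[\bn]$ would be disconnected), so $G[\bn]$ is connected on $\geq 2$ vertices and has no isolated nodes. The proposition then gives
\[
p_H(\bn)\big|_{n_j = 0,\ j \notin V(G[\bn])} \;=\; (-2)^{|V(H) \setminus V(G[\bn])|} \cdot p_{G[\bn]}\bigl(\{n_i : i \in V(G[\bn])\}\bigr).
\]
Hence for any $I \subseteq V(G[\bn])$, the coefficient of $\prod_{i \in I} n_i$ in $p_H$ vanishes if and only if the corresponding coefficient in $p_{G[\bn]}$ vanishes; and by Proposition~\ref{Pcoeff}(1) the latter is, up to a nonzero power of $-2$, equal to $\det (M_{G[\bn]})_{I \times I}$. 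So the problem reduces to showing this principal minor is zero whenever $\{v_1, v_2\} \subseteq I$.

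Next I would exploit the twin structure inside $G[\bn]$. By the definition of a blowup, any two copies of the same vertex of $G$ have identical neighborhoods in $G[\bn]$, so the transposition swapping $v_1 \leftrightarrow v_2$ and fixing all other vertices is an automorphism of $G[\bn]$. This forces $d_{G[\bn]}(v_1, w) = d_{G[\bn]}(v_2, w)$ for every $w \in V(G[\bn]) \setminus \{v_1, v_2\}$. Moreover, because $G$ is connected on $\geq 2$ vertices, $v$ has at least one neighbor $u \in V(G)$; any copy of $u$ is then a common neighbor of $v_1$ and $v_2$ in $G[\bn]$, so $d_{G[\bn]}(v_1, v_2) = 2$. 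Combining these, the $v_1$- and $v_2$-rows of $M_{G[\bn]} = D_{G[\bn]} + 2\Id$ coincide entrywise (at columns $v_1, v_2$ both rows read $2,2$, and at any other column $w$ the entries agree by the distance identity). This row-equality is preserved on passing to any $I$ containing $\{v_1, v_2\}$, so the submatrix $(M_{G[\bn]})_{I \times I}$ has two equal rows and its determinant vanishes, completing the proof.

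The main obstacle I anticipate is conceptual rather than computational: one might be tempted to argue directly in $H$ by showing that the rows of $M_H$ indexed by $v_1$ and $v_2$ are equal. But this is \emph{false} in general, since $H$ can contain shortcut paths through $V(H) \setminus V(G[\bn])$ that destroy the twin symmetry -- e.g., one may have $d_H(v_1, w) \neq d_H(v_2, w)$ for some $w \in V(G[\bn])$ even though $G[\bn]$ is induced in $H$. The force of the argument lies in recognising that Proposition~\ref{Pcoeff}(2) lets one ignore these ambient shortcuts: after restricting the coefficient computation to $V(G[\bn])$, all relevant distances are those of $G[\bn]$, where the twin symmetry is intact.
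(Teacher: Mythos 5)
Your central computation --- that $v_1$ and $v_2$ are twins in $G[\bn]$, so the corresponding rows of $M_{G[\bn]}$ coincide entrywise (the $+2\Id$ term exactly matches the diagonal entries to $d_{G[\bn]}(v_1,v_2)=2$), hence every principal minor of $M_{G[\bn]}$ containing both indices vanishes and the coefficient is zero via Proposition~\ref{Pcoeff}(1) --- is correct and is in essence the paper's own one-line proof. The only difference is that you first pass from $p_H$ to $p_{G[\bn]}$ using Proposition~\ref{Pcoeff}(2) before exhibiting the two equal rows, whereas the paper exhibits the two equal rows of $(M_H)_{I \times I}$ directly.

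Where you go astray is in the final paragraph, and the error is not cosmetic. You assert that the direct argument in $H$ can fail because shortcuts through $V(H) \setminus V(G[\bn])$ may give $d_H(v_1,w) \neq d_H(v_2,w)$ for some $w \in V(G[\bn])$, and that Proposition~\ref{Pcoeff}(2) lets you ``ignore'' such shortcuts. It does not. Comparing coefficients via Proposition~\ref{Pcoeff}(1), the identity you quote from Proposition~\ref{Pcoeff}(2) forces $\det(M_H)_{I \times I} = \det(M_{G[\bn]})_{I \times I}$ for all $I \subset V(G[\bn])$, which holds only when $H$ induces on $V(G[\bn])$ the same distances as $G[\bn]$ --- that is, precisely when there are no shortcuts. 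If shortcuts exist, your first step is false, and so is the proposition itself under the literal reading: take $G = P_4$ with vertices $x,y,z,u$ in path order, $\bn = (2,1,1,1)$, and let $H$ be $G[\bn]$ plus one extra vertex $c$ adjacent only to $x_1$ and $u$. Then $G[\bn]$ is induced in $H$, yet $d_H(x_1,u) = 2 \neq 3 = d_H(x_2,u)$, the minor $\det (M_H)_{\{x_1,x_2,u\} \times \{x_1,x_2,u\}} = -2$, and the coefficient of $n_{x_1} n_{x_2} n_u$ in $p_H$ equals $16 \neq 0$. So ``induced subgraph'' must here (and in Proposition~\ref{Pcoeff}(2)) be read as ``isometrically embedded subgraph,'' as it is automatically in the metric-space setting of~\cite{CK-blowup}. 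Under that reading, the argument you dismiss works verbatim: for $w \in I$ one has $d_H(v_i,w) = d_{G[\bn]}(v_i,w)$, so the two relevant rows of $(M_H)_{I \times I}$ already coincide. Your detour therefore neither avoids an obstacle nor gains generality; both arguments stand or fall under exactly the same hypothesis.
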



\begin{proof}
By Proposition~\ref{Pcoeff}(1), it suffices to show that $(M_H)_{I \times
I}$ is singular. In turn, this holds because one verifies that the rows
of $M_H$ indexed by $v_1, v_2$ are identical.
\end{proof}

As a consequence of Proposition~\ref{Pzeroterms}, the assertion preceding
it, which involved $n_i n_{i+1} n_{i+2}$, now extends to arbitrary graphs
containing two independent nodes $a,c$ with a common neighbor $b$. It
is thus natural to return to~\eqref{Epathmatroid}, and ask two things:
(a)~Does this equality hold for all $k$? (b)~Independent of~(a), is the
right-hand side also a delta-matroid?
It is also natural to ask if (c)~the converse to
Proposition~\ref{Pzeroterms} holds: namely, if a monomial does not occur
in $p_G$, does the induced subgraph on those vertices contain two copies
of a vertex inside some blowup? The next result answers these questions.

\begin{proposition}\label{Pcounter}
Notation as above.
\begin{enumerate}
\item The right-hand side of~\eqref{Epathmatroid} is a delta-matroid for
every $k$.

\item The equality in~\eqref{Epathmatroid} holds if and only if $k \leq
8$.

\item The converse to Proposition~\ref{Pzeroterms} does not hold, even
for path graphs.
\end{enumerate}
\end{proposition}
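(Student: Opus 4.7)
The plan is to treat all three parts together: parts (2) and (3) rest on a single explicit singularity of a distance matrix, while part (1) is a short combinatorial check. The main obstacle is locating and certifying that singularity in part (3); the other work is either routine enumeration or direct case analysis.

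For part (3), I would produce an explicit subset $I \subset V(P_9)$ whose monomial vanishes in $p_{P_9}$ yet admits no ``blowup copies'' in the sense of Proposition~\ref{Pzeroterms}. The candidate is $I = V(P_9) = \{1,\dots,9\}$: by Proposition~\ref{Pcoeff}(1), the coefficient of $n_1 \cdots n_9$ in $p_{P_9}$ equals $\det M_{P_9}$, and I claim this vanishes. To prove this, I would apply the three-term row operations $R_i \mapsto R_i - 2R_{i+1} + R_{i+2}$ for $i = 1,\dots,7$, which reduce the first seven rows to the banded form $(0,\dots,0,2,-2,2,0,\dots,0)$; clearing the last two rows against these then forces them to become identical. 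Equivalently, one exhibits the explicit null vector $v = (1,0,-1,-1,0,1,1,0,-1)^T$, whose annihilation $M_{P_9}\, v = 0$ is verified by direct inspection. Now the induced subgraph of $P_9$ on $I$ is $P_9$ itself, and no two vertices of $P_9$ share a common neighborhood; consequently $P_9$ cannot be presented as a non-trivial blowup $G[\bn]$ with some $n_v \geq 2$, so the hypothesis of Proposition~\ref{Pzeroterms} is never satisfiable for this $I$, refuting its converse. The same $I$ also delivers the ``only if'' direction of (2): embedded as a contiguous sub-path of any $P_k$ with $k \geq 9$, it has the same restricted matrix (since pairwise distances coincide) and is manifestly neither of the form $\{i,i+2\}$ nor $\{i,i+1,i+2\}$.

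For the ``if'' direction of (2), I would run a finite verification for $k \leq 8$. Each potentially-missing monomial $\prod_{i \in I} n_i$ corresponds to a principal submatrix $(M_{P_k})_{I \times I}$, which depends only on the consecutive gaps $b_s := i_{s+1} - i_s$ of $I$. It thus suffices to check that, among integer gap sequences $(b_1,\dots,b_{m-1})$ with $\sum_s b_s \leq 7$, the only singular restrictions arise from $(2)$ and $(1,1)$---exactly the forbidden pair and triple. This is a manageable enumeration; the row-reduction used above for $P_9$ adapts routinely to each weighted-path restriction, and the gap-reversal symmetry halves the work. In particular $\det M_{P_k} \neq 0$ for $4 \leq k \leq 8$, so the new phenomenon first appears at $k=9$.

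For part (1), I would verify the symmetric exchange axiom directly. Write $\mathcal{F}_k$ for the RHS of~\eqref{Epathmatroid}, with forbidden sets $F_i := \{i,i+2\}$ and $T_i := \{i,i+1,i+2\}$. Given $A,B \in \mathcal{F}_k$ and $x \in A \Delta B$: if $A \Delta \{x\} \in \mathcal{F}_k$, take $y = x$; otherwise $A \Delta \{x\}$ equals some $F_i$ or $T_i$, and a brief case analysis on the configurations of $A$ and $x$ producing this shows that if no $y \in A \Delta B$ repairs the toggle, then $B$ itself would be forced to be a forbidden set, contradicting $B \in \mathcal{F}_k$.
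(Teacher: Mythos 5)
Your proposal is correct, and for parts (2) and (3) it follows essentially the same route as the paper: the paper also establishes the $k\le 8$ cases by a finite explicit computation, detects the failure at $k=9$ via $\det M_{P_9}=0$ together with Proposition~\ref{Pcoeff}, and derives part (3) from the observation that $P_9$ is not a blowup of a smaller graph. Your only additions there are cosmetic but welcome: the explicit null vector $(1,0,-1,-1,0,1,1,0,-1)^T$ (which does satisfy $M_{P_9}v=0$ -- I checked all nine rows) turns the paper's ``one computes'' into a certificate, and organizing the $k\le 8$ check by gap sequences is a sensible way to package the same finite verification. The genuine divergence is in part (1): the paper obtains it as the special case $T=P_k$ of Theorem~\ref{Ttree-blowup} (the tree-blowup delta-matroid, whose Steiner-tree infeasibility condition reduces on a path exactly to $\max I-\min I=2$), whereas you verify the symmetric exchange axiom directly. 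Your sketch does close up: an infeasible set is precisely one of diameter $2$ contained in a window $\{m,m+1,m+2\}$, and once $A\Delta\{x\}$ lands in such a window the only second toggle $y$ that fails to repair it is $y=m+1$, so if $A\Delta B\subseteq\{x,m+1\}$ then $B$ equals $A\Delta\{x\}$ or $A\Delta\{x,m+1\}$, both infeasible -- contradiction. This direct argument is more elementary and self-contained for paths, at the cost of the generality the paper's route buys (a delta-matroid for every tree). Two small points to tidy: you should also record the trivial axiom $\bigcup_{F\in\mathcal F}F=V$ (all singletons are feasible), and you should state explicitly that the two bad cases in your exchange analysis force $B$ to be exactly $F_i$ or $T_i$, rather than leaving it at ``a brief case analysis shows''.
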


\begin{proof}
The first part is presently explained in greater generality, for all
trees. Second, the equality in~\eqref{Epathmatroid} holds for $k \leq 8$
by explicit computations (e.g., using a computer). One also computes:
$\det (M_{P_9}) = 0$. Hence by Proposition~\ref{Pcoeff}(3), the
coefficient of $n_i n_{i+1} \cdots n_{i+8}$ in $p_{P_k}(\bn)$ is zero for
all $1 \leq i \leq k-8$. It follows that the left-hand side
of~\eqref{Epathmatroid} is a strict subset of the right-hand side, for $k
\geq 9$. The third/final assertion now follows from this computation,
since $P_9$ is not the blowup of a smaller graph.
\end{proof}

We now explore if the right-hand delta-matroid in~\eqref{Epathmatroid}
can be generalized to other graphs. This indeed turns out to hold for all
trees; to describe it, recall that the \textit{Steiner tree} $T(I)$ of a
subset of vertices $I$ of a tree is the unique smallest sub-tree
containing $I$.

\begin{theorem}\label{Ttree-blowup}
Suppose $T$ is any tree, and we define a subset of vertices $I$ to be
infeasible if its Steiner tree $T(I)$ has two leaves which are in $I$ and
have the same parent. (All other subsets are feasible.) Then the set
$\mathcal{M}'(T)$ of feasible subsets is a delta-matroid.
\end{theorem}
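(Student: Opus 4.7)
The plan is to verify the two axioms of a delta-matroid: the covering axiom and the symmetric exchange axiom. The covering axiom is immediate, since each singleton $\{v\}$ is feasible: its Steiner tree is just $v$, which has no pair of leaves that can share a parent.

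For the exchange axiom, I would first reformulate infeasibility in purely local graph-theoretic terms. For a vertex $u$ with a neighbor $v$ in $T$, let $T_{u \to v}$ denote the component of $T \setminus \{uv\text{-edge}\}$ containing $v$. Then $I \subseteq V$ is infeasible precisely when there is a triple $(u_1, u_2, v)$---which I will call a \emph{cherry} of $I$---with $u_1, u_2 \in I$ sharing a common $T$-neighbor $v$, and with every element of $I \setminus \{u_1, u_2\}$ lying in $T_{u_1 \to v} \cap T_{u_2 \to v}$ (i.e.\ on the far side of $v$, once the $u_1$- and $u_2$-branches are stripped off). This is just a reformulation of ``$u_1, u_2$ are sibling leaves of $T(I)$ with common parent $v$''.

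Given feasible $A, B$ and $x \in A \Delta B$, without loss of generality $x \in A \setminus B$, I split on whether $A \setminus \{x\}$ is feasible. If it is, take $y = x$, so that $A \Delta \{x, y\} = A \setminus \{x\}$ is feasible and we are done. Otherwise $A \setminus \{x\}$ contains some cherry $(u_1, u_2, v)$; since $A$ itself is feasible, adjoining $x$ must destroy this cherry, which forces $x \in T_{v \to u_1} \cup T_{v \to u_2}$, WLOG $x \in T_{v \to u_1}$. I would then look for $y \in A \Delta B$ playing an analogous role: either (i)~a vertex $y \in B \setminus A$ lying in $T_{v \to u_1} \cup T_{v \to u_2}$, so that $(A \setminus \{x\}) \cup \{y\}$ no longer has the cherry $(u_1, u_2, v)$; or (ii)~a vertex $y \in A \setminus B$ lying in $T_{u_1 \to v} \cap T_{u_2 \to v}$ whose removal from $A \setminus \{x\}$ forces the Steiner tree to prune past $v$ and thereby eliminates the cherry.

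The principal obstacle will be showing that some such $y$ always exists in $A \Delta B$, and that the resulting swap does not create a fresh cherry elsewhere in the tree. The leverage is the feasibility of $B$: if every candidate $y$ failed, one should be able to trace the obstruction back to a cherry inside $B$ itself, contradicting $B \in \mathcal{M}'(T)$. I expect the complete argument to require a careful local analysis in $T$ around $v$---tracking which of the branches of $T \setminus \{v\}$ contain elements of $A$, $B$, and $A \Delta B$---combined with an induction on $|A \Delta B|$ (or on $|V(T)|$ via leaf-deletion) to propagate single-cherry corrections into the full exchange.
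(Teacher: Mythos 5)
The paper defers the proof of this theorem to the full version \cite{CK-blowup}, so there is no in-text argument to compare against; judged on its own, what you have written is a proof \emph{plan} rather than a proof, and the plan contains both an acknowledged hole and a concretely wrong step. The parts that are sound: the covering axiom; the reformulation of infeasibility via cherries $(u_1,u_2,v)$ (all of $I\setminus\{u_1,u_2\}$ lying in $T_{u_1\to v}\cap T_{u_2\to v}$ is indeed equivalent to $u_1,u_2$ being sibling leaves of $T(I)$ with parent $v$); and the observation that if $A$ is feasible but $A\setminus\{x\}$ is not, then $x\in T_{v\to u_1}\cup T_{v\to u_2}$ for each cherry of $A\setminus\{x\}$.

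The gaps. First, "WLOG $x\in A\setminus B$" is not a WLOG: the exchange axiom is not symmetric in $A$ and $B$, and the case $x\in B\setminus A$, where $A\,\Delta\,\{x\}=A\cup\{x\}$ and any new cherry must have $x$ as one of its two leaves, requires a separate argument that you never give. Second, your alternative (ii) is based on a false mechanism: if $y$ lies in $T_{u_1\to v}\cap T_{u_2\to v}$, i.e.\ on the far side of $v$, then deleting $y$ from $A\setminus\{x\}$ can \emph{never} destroy the cherry $(u_1,u_2,v)$ --- as long as $u_1$ and $u_2$ remain, the Steiner tree still contains the path $u_1,v,u_2$ and nothing new appears beyond $u_1$ or $u_2$ when points are removed, so they stay sibling leaves. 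The only single deletion that kills this cherry is $y\in\{u_1,u_2\}$, so alternative (ii) should read $y\in\{u_1,u_2\}\setminus B$. Third, the three steps you explicitly defer are precisely where the content of the theorem lies: (a) existence of a valid $y\in A\,\Delta\,B$ (this one does follow the route you gesture at --- if every candidate fails, one shows $B\cap(T_{v\to u_1}\cup T_{v\to u_2})=\{u_1,u_2\}$ and hence that $(u_1,u_2,v)$ is a cherry of $B$, a contradiction); (b) the fact that $A\setminus\{x\}$ cannot carry two distinct cherries simultaneously blocked by the single vertex $x$ (true, but it needs the tree geometry --- $x$ would have to lie beyond a leaf of each cherry, forcing the cherries to merge or forcing a cherry already present in $A$); and (c) the verification that the swap creates no fresh cherry, e.g.\ that adjoining $y\in T_{v\to u_1}$ does not make $y$ a sibling leaf of some other element of $A$. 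Since none of (a)--(c) is carried out and (ii) is incorrect as stated, the symmetric exchange axiom has not been verified.
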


\noindent (See \cite{CK-blowup} for the proof.) We term this
delta-matroid the \textit{tree-blowup delta-matroid} $\mathcal{M}'(T)$.
Notice by Proposition~\ref{Pcounter}(2) that $\mathcal{M}'(P_k) \neq
\mathcal{M}_{P_k}$ for $k \geq 9$, so this is not the blowup
delta-matroid of $P_k$. Moreover, $\mathcal{M}'(T)$ also appears to not
be known to experts.

Our final result answers a natural question:
\textit{Akin to the delta-matroid $\mathcal{M}_{P_k}$, can the definition
of $\mathcal{M}'(T)$ also be extended to yield a delta-matroid for every
graph?}
In this regard, a key observation is that in Theorem~\ref{Ttree-blowup},
a set of nodes $I$ is infeasible if and only if its Steiner tree $T(I)$
is a blowup of a graph with a strictly smaller vertex set. We therefore
introduce the following two possible extensions of this version of
infeasibility to general graphs, which are both natural choices:

\begin{defn}
Let $G = (V,E)$ be a finite simple connected graph. Say that a subset $I
\subset V$ is
\begin{enumerate}
\item \emph{infeasible of the first kind} if there are vertices $v_1
\neq v_2 \in I$ and a subset $I \subset \widetilde{I} \subset V$,
satisfying:
(a)~the induced subgraph $G(\widetilde{I})$ on $\widetilde{I}$ of $G$ is
connected, and
(b)~$v_1, v_2$ have the same set of neighbors in $G(\widetilde{I})$.

\item \emph{infeasible of the second kind} if there exist $v_1 \neq v_2
\in I$ and $I \subset \widetilde{I} \subset V$, with:
(a)~the induced graph $G(\widetilde{I})$ has:
$M_{G(\widetilde{I})} = (M_G)_{\widetilde{I} \times \widetilde{I}}$ and
(b)~$v_1, v_2$ have the same neighbors in~$G(\widetilde{I})$.
\end{enumerate}
Also define $\mathcal{M}'_1(G)$ (respectively, $\mathcal{M}'_2(G)$) to
comprise all subsets of $V$ that are not infeasible of the first
(respectively, second) kind.
\end{defn}

\vspace*{-5mm}

\begin{figure}[ht]\label{Fig2}
\definecolor{xdxdff}{rgb}{0.49,0.49,1}
\hspace*{11.3cm}\begin{tikzpicture}
\draw (0,1)-- (-1.5,0);
\draw (0,1)-- (1.5,0);
\draw (0,-1)-- (-1.5,0);
\draw (0,-1)-- (1.5,0);
\draw (1.5,0)-- (3,1);
\draw (1.5,0)-- (3,-1);
\draw (1.5,-2)-- (3,-1);
\draw (1.5,-2)-- (0,-1);
%
\fill [color=xdxdff] (-1.5,0) circle (1.5pt);
\draw[color=black] (-1.5,-0.25) node {$u$};
\fill [color=xdxdff] (0,-1) circle (1.5pt);
\draw[color=black] (0,-1.4) node {$w_2$};
\fill [color=xdxdff] (1.5,0) circle (1.5pt);
\draw[color=black] (1.5,-0.25) node {$z$};
\fill [color=xdxdff] (3,1) circle (1.5pt);
\draw[color=black] (3,1.4) node {$v_1$};
\fill [color=xdxdff] (3,-1) circle (1.5pt);
\draw[color=black] (3,-1.4) node {$v_2$};
\fill [color=xdxdff] (1.5,-2) circle (1.5pt);
\draw[color=black] (1.5,-2.4) node {$x$};
\fill [color=xdxdff] (0,1) circle (1.5pt);
\draw[color=black] (0,1.4) node {$w_1$};
\draw[color=black] (1,-3.1) node {\textbf{Figure 2:} The graph ${\bf
G}_\circ$};
\end{tikzpicture}
\end{figure}

\vspace*{-54mm}As an example, if $G = T$ is a tree, then one checks

\noindent that $\mathcal{M}'_1(T) = \mathcal{M}'_2(T) = \mathcal{M}'(T)$.
It is now natural to

\noindent ask if either $\mathcal{M}'_1(G)$ or $\mathcal{M}'_2(G)$ is a
delta-matroid for

\noindent all graphs $G$. It turns out that this is not the case:

\begin{proposition}[\cite{CK-blowup}]
For the graph $G = {\bf G}_\circ$ (see Figure~2),

\noindent neither $\mathcal{M}'_1(G)$ nor $\mathcal{M}'_2(G)$ is a
delta-matroid.
\end{proposition}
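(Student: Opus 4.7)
The plan is to exhibit, for each $i \in \{1,2\}$, an explicit pair of feasible sets $A, B \in \mathcal{M}'_i({\bf G}_\circ)$ together with a distinguished $x \in A \Delta B$ such that no $y \in A \Delta B$ yields $A \Delta \{x,y\} \in \mathcal{M}'_i({\bf G}_\circ)$, thereby refuting the symmetric exchange axiom. (The covering condition $\bigcup_{F} F = V$ is immediate, since singletons are always feasible.) The proof is entirely finite and combinatorial once the right $A, B$ are in hand, so the real work lies in finding them.

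The first step is to catalogue the small ``twin witnesses'' in ${\bf G}_\circ$: pairs $\{a,b\} \subset V$ admitting a connected $\widetilde{I} \supseteq \{a,b\}$ with $N_{G(\widetilde{I})}(a) = N_{G(\widetilde{I})}(b)$. Reading off Figure~2, the obvious pairs are $\{w_1, w_2\}$ (witnessed by the induced $4$-cycle on $\{u, w_1, z, w_2\}$) and $\{v_1, v_2\}$ (witnessed by $\{z, v_1, v_2\}$). The cross-petal structure of ${\bf G}_\circ$ produces subtler twin pairs as well: for example $\{w_1, v_2\}$ becomes a twin pair in $\widetilde{I} = \{w_1, z, v_2\}$, where both vertices have sole neighbour $z$; analogously for $\{v_1, w_2\}$ via $\{v_1, z, w_2\}$, and for $\{w_2, v_2\}$ via $\{w_2, z, v_2, x\}$. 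Each such witness renders every superset $I$ containable in the corresponding connected $\widetilde{I}$ infeasible of the first kind.

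For infeasibility of the second kind one then rechecks, for each candidate $\widetilde{I}$, the distance-preservation condition $M_{G(\widetilde{I})} = (M_G)_{\widetilde{I} \times \widetilde{I}}$. Because each $\widetilde{I}$ above has only three or four vertices, this is a handful of shortest-path comparisons per witness; for instance the path $w_1 - z - v_2$ realises the shortest $w_1$-to-$v_2$ path in all of ${\bf G}_\circ$, so $\{w_1, v_2\}$ is a second-kind witness too. This step partitions the above witnesses into ``both-kind'' and ``first-kind-only,'' and identifies the small infeasible subsets for each $\mathcal{M}'_i$. Armed with these lists, I would search among pairs $(A, B)$ with $|A \Delta B| \in \{3, 4\}$ for one that blocks every exchange: the strategy is to pick $A, B$ so that each set $A \Delta \{x, y\}$ reinstates one of the listed twin pairs inside a connected twin-preserving superset, while $A, B$ themselves avoid every twin witness. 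A short computer enumeration over all feasible $(A, B)$ is a safe fallback.

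The main obstacle is not any single verification -- each is a finite check -- but the coordinated blocking: one must simultaneously destroy every possible $y$-exchange, which requires the candidate $(A, B)$ to be positioned so that all $\binom{|A \Delta B|}{2}$ exchange targets each intersect some twin witness. This is precisely what makes ${\bf G}_\circ$ work and simpler examples fail: it contains enough interlocking witnesses for one coherent $(A, B)$ to land every exchange in an infeasible set. A secondary subtlety is that since $\mathcal{M}'_2 \supseteq \mathcal{M}'_1$, a single $(A, B)$ may not refute the axiom for both kinds simultaneously; in that event, I would produce two separate pairs, one for each kind, distinguished by whether their blocking witnesses survive the distance-preservation condition.
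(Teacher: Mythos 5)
There is a genuine gap: your write-up is a search strategy, not a proof. The proposition is an existence-of-counterexample statement, so its entire mathematical content is an explicit violating instance -- for each $i \in \{1,2\}$, concrete feasible sets $A, B \in \mathcal{M}'_i({\bf G}_\circ)$ and an element $x \in A \Delta B$ for which every candidate $y \in A \Delta B$ gives an infeasible $A \Delta \{x,y\}$. You correctly reduce the problem to refuting the symmetric exchange axiom (the covering axiom is indeed immediate from feasibility of singletons), and you correctly observe $\mathcal{M}'_2(G) \supseteq \mathcal{M}'_1(G)$, but you never produce $A$, $B$, or $x$; you say you ``would search'' for them and offer a computer enumeration as a fallback. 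The assertion that ${\bf G}_\circ$ ``contains enough interlocking witnesses for one coherent $(A,B)$ to land every exchange in an infeasible set'' is exactly the claim to be proved, and it is left unverified. (The extended abstract defers the proof to the full paper, but that proof necessarily consists of exhibiting and checking such an instance.)

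A secondary issue is that your catalogue of twin pairs is incomplete, which would undermine the verification even once candidates $(A,B)$ are chosen: besides $\{w_1,w_2\}$, $\{v_1,v_2\}$, $\{w_1,v_2\}$, $\{w_2,v_1\}$, $\{w_2,v_2\}$, the pairs $\{w_1,v_1\}$, $\{u,z\}$, $\{u,x\}$, and $\{z,x\}$ are also first-kind witnesses (each is a non-adjacent pair with a common neighbour, hence twinned in the induced path through that neighbour). An incomplete list is dangerous in both directions: you need the \emph{full} list both to certify that your chosen $A$ and $B$ are actually feasible (no witness pair sits inside them within a suitable connected $\widetilde{I}$) and to confirm that every exchange target is infeasible. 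Note also that infeasibility of $I$ is not determined by $I$ containing a twin pair alone -- the witness $\widetilde{I}$ must contain \emph{all} of $I$ and be connected (and, for the second kind, isometrically embedded) -- so the bookkeeping is more delicate than ``$A \Delta \{x,y\}$ intersects some twin witness.''
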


In closing, we note the above results describe several

\noindent novel invariants associated to finite simple connected

\noindent graphs (in fact, finite metric spaces). These include the

\noindent polynomials $p_G(\bn)$, $u_G(n)$; the delta-matroid
$\mathcal{M}_{M_G}$ (and 

\noindent $\mathcal{M}'(G)$ for $G$ a tree); but also ``simpler''
invariants like $\deg p_G$, $\deg u_G$. (These degrees are not
necessarily $|V|$ even if $G$ is not a blowup of a smaller graph; e.g.,
$G = P_k$ for $k \geq 9$, by Proposition~\ref{Pcounter}.) It would be
desirable and interesting to explore if these are relatable to more
``familiar'' combinatorial graph invariants.






\end{document}